\newtheorem{theorem}{Theorem}[section]
\newtheorem{lemma}[theorem]{Lemma}
\newtheorem{proposition}[theorem]{Proposition}
\newtheorem{corollary}[theorem]{Corollary}
\theoremstyle{definition}
\theoremstyle{remark}
\newtheorem*{remark}{Remark}
\newtheorem*{acknowledgement}{Acknowledgement}
\title{A sharp Sobolev--Strichartz estimate for the wave equation}
\subjclass[2010]{Primary 35B45; Secondary 35L05}
\author{Neal Bez}
\address{Neal Bez, Department of Mathematics, Graduate School of Science and Engineering,
Saitama University, Saitama 338-8570, Japan}
\email{nealbez@mail.saitama-u.ac.jp}
\author{Chris Jeavons}
\address{Chris Jeavons, School of Mathematics, University of Birmingham, Edgbaston, Birmingham, B15 2TT, UK}
\email{jeavonsc@maths.bham.ac.uk}
\date{\today}
\begin{document}

\begin{abstract}
We calculate the the sharp constant and characterise the extremal initial data in $\dot{H}^{\frac{3}{4}} \times \dot{H}^{-\frac{1}{4}}$ for the $L^4$ Sobolev--Strichartz estimate for the wave equation in four space dimensions.
\end{abstract}
\maketitle
\section{Introduction}
For $d\geq 2$ and $s\in \left[\frac{1}{2},\frac{d}{2}\right)$ the well-known Sobolev--Strichartz estimate for the one-sided wave propagator states that, for some finite constant $C > 0$, 
\begin{equation}\label{WSSt}
\|e^{it\sqrt{-\Delta}}f\|_{L^p(\mathbb{R}^{d+1})} \leq C\|f\|_{\dot{H}^s(\mathbb{R}^d)}
\end{equation}
for each $f$ in the homogeneous Sobolev space $\dot{H}^s(\mathbb{R}^d)$, with norm given by $\|f\|_{\dot{H}^s(\mathbb{R}^d)}=\|(-\Delta)^{\frac{s}{2}}f\|_{L^2(\mathbb{R}^d)}$, and where
\[p=\frac{2(d+1)}{d-2s}.\]

The sharp constant in the estimate \eqref{WSSt} given by
\[
\mathbf{W}(d,s):=\operatorname*{sup}_{f\in\dot{H}^s \setminus \{0\}}\frac{\|e^{it\sqrt{-\Delta}}f\|_{L^p(\mathbb{R}^{d+1})}}{\|f\|_{\dot{H}^s(\mathbb{R}^d)}}
\]
has attracted attention in recent years; however, to date, the value of $\mathbf{W}(d,s)$ and a full characterisation of extremisers (those $f$ which attain the supremum) has been established only in some rather isolated cases. It is known that, for all admissible $(d,s)$, an extremiser \emph{exists} (see \cite{Bulut}, \cite{FVV}, \cite{Ramos}). Identifying the \emph{exact shape} of such extremisers appears to be a rather difficult problem, with prior results in this direction only available in the cases $(d,s)$ equal to $(2,\frac{1}{2})$ and $(3,\frac{1}{2})$, due to Foschi \cite{Foschi}, and the case $(d,s)$ equal to $(5,1)$ in \cite{BR}. In each of these cases, the initial datum $f_\star$ whose Fourier transform is given by
\begin{equation*} 
\widehat{f_\star}(\xi) = \frac{e^{-|\xi|}}{|\xi|}
\end{equation*}
is extremal; in fact, these works also gave a full characterisation of the extremal data by showing that any extremiser $f$ coincides with $f_\star$ up to the action of a certain group of transformations (which are slightly different when $s=\frac{1}{2}$ and $s = 1$). Based on these results, it is tempting to boldly conjecture that such $f$ are extremisers for all admissible $(d,s)$. Whilst this is premature, the purpose of this short paper is to add further weight and show that this is indeed the case for $(d,s) = (4,\frac{3}{4})$.
\begin{theorem}\label{thm1}
The one-sided wave propagator satisfies the estimate
\begin{equation}\label{eq1}
\|e^{it\sqrt{-\Delta}}f\|_{L^4(\mathbb{R}^5)} \leq \mathbf{W}(4,\tfrac{3}{4})\left\|f\right\|_{\dot{H}^{\frac{3}{4}}(\mathbb{R}^4)}
\end{equation}
with constant
\[\mathbf{W}(4,\tfrac{3}{4})=\left(\frac{4}{15\pi^2}\right)^{\frac{1}{4}}.\]
The constant is sharp and is attained if and only if 
\[\widehat{f}(\xi)=\frac{e^{a|\xi|+ib\cdot\xi+c}}{|\xi|},\]
where $a, c\in\mathbb{C}$ such that $\operatorname{Re}(a)<0$, and $b\in \mathbb{R}^d$.
\end{theorem}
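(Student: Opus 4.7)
The approach is to adapt the bilinear Cauchy--Schwarz method used by Foschi in the $(d,s)=(3,\tfrac12)$ case and by Bez--Rogers in the $(5,1)$ case, augmented with a spherical-harmonic step specific to four spatial dimensions.

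Since $p=4$, I would first linearize via $\|u\|_{L^4}^4=\|u^2\|_{L^2}^2$ and Plancherel to write $\|u\|_{L^4}^4=c_0\int|\widehat{u^2}(\omega,\tau)|^2\,d\omega\,d\tau$ for an explicit Plancherel constant $c_0$, where
\[
\widehat{u^2}(\omega,\tau)\,\propto\,\int_{\mathbb{R}^4}\hat f(\xi)\hat f(\omega-\xi)\,\delta(\tau-|\xi|-|\omega-\xi|)\,d\xi
\]
is supported on the prolate spheroid $\{\xi:|\xi|+|\omega-\xi|=\tau\}$. Cauchy--Schwarz on this spheroid with weight $(|\xi||\omega-\xi|)^{-1}$ --- the unique weight that is saturated by the candidate extremizer $\hat f_\star(\xi)=e^{-|\xi|}/|\xi|$, since $\hat f_\star(\xi)\hat f_\star(\omega-\xi)=e^{-\tau}/(|\xi||\omega-\xi|)$ on the spheroid --- yields
\[
\|u\|_{L^4}^4\le c_1\iint|\hat f(\xi_1)|^2|\hat f(\xi_2)|^2\,|\xi_1||\xi_2|\,\Phi_{-1}(\xi_1+\xi_2,|\xi_1|+|\xi_2|)\,d\xi_1 d\xi_2,
\]
where $\Phi_{-1}(\omega,\tau):=\int(|\xi||\omega-\xi|)^{-1}\delta(\tau-|\xi|-|\omega-\xi|)\,d\xi$ equals four times the self-convolution of the Lorentz-invariant cone measure. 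A direct computation via polar coordinates aligned with $\omega$ gives $\Phi_{-1}(\omega,\tau)=(\pi^2/2)\sqrt{\tau^2-|\omega|^2}$ in $d=4$. Using $\tau^2-|\omega|^2=2|\xi_1||\xi_2|(1-\cos\theta)$ on the support, the kernel collapses to $(\pi^2/\sqrt 2)(|\xi_1||\xi_2|)^{3/2}\sqrt{1-\cos\theta}$; the factor $(|\xi_1||\xi_2|)^{3/2}$ matches the $\dot H^{3/4}$ norm exactly.

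The main obstacle is handling the remaining angular factor $\sqrt{1-\cos\theta}$. In the $(5,1)$ case the analogous factor is $1-\cos\theta$, which admits a bilinear identity reducing the bound to $\|f\|_{\dot H^1}^4$ minus the square of a moment; the square-root version here does not decompose bilinearly, so I would argue spectrally. Expand $\sqrt{1-x}$ in the Gegenbauer polynomials $C_n^1(x)=U_n(x)$, the Chebyshev polynomials of the second kind, which are orthogonal with respect to $(1-x^2)^{1/2}\,dx$ and form the zonal spherical harmonics of $S^3$. Substituting $x=\cos\theta$ and using $1-x=2\sin^2(\theta/2)$, $\sqrt{1+x}=\sqrt 2\cos(\theta/2)$, together with product-to-sum identities, the Fourier coefficients are
\[
a_n=\frac{\sqrt 2}{\pi}\Bigl(\frac{1}{2n+3}+\frac{1}{2n+1}-\frac{1}{2n+5}-\frac{1}{2n-1}\Bigr).
\]
One reads off $a_0=32\sqrt 2/(15\pi)$, and rewriting $a_n=\frac{2\sqrt 2}{\pi}\bigl(\frac{1}{(2n+3)(2n+5)}-\frac{1}{(2n-1)(2n+1)}\bigr)$ shows $a_n<0$ for every $n\ge 1$. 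Setting $d\mu(\xi):=|\xi|^{3/2}|\hat f(\xi)|^2\,d\xi$ and invoking the Gegenbauer addition formula on $S^3$,
\[
\iint\sqrt{1-\hat\xi_1\cdot\hat\xi_2}\,d\mu(\xi_1)d\mu(\xi_2)=\sum_{n\ge 0}a_n\sum_\ell|\widehat\mu(n,\ell)|^2\leq a_0\Bigl(\int d\mu\Bigr)^2,
\]
with equality if and only if the angular marginal of $\mu$ is uniform on $S^3$.

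For the extremizer characterization, equality in the Cauchy--Schwarz step forces $(|\xi_1|\hat f(\xi_1))(|\xi_2|\hat f(\xi_2))$ to depend only on $(\xi_1+\xi_2,|\xi_1|+|\xi_2|)$ on each spheroid; taking logarithms and differentiating tangentially yields $|\xi|\hat f(\xi)=e^{a|\xi|+B\cdot\xi+c}$ for some $a,c\in\mathbb C$ and $B\in\mathbb C^d$. Equality in the spherical step further forces the angular marginal of $|\hat f|^2$ to be uniform, i.e.\ $\operatorname{Re}(B)=0$, giving $B=ib$ with $b\in\mathbb R^d$; the condition $f\in\dot H^{3/4}$ gives $\operatorname{Re}(a)<0$, matching the claimed family. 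Tracking all Plancherel and numerical factors gives $\mathbf{W}(4,\tfrac34)^4=\frac{\pi^2 a_0}{8\sqrt 2\,\pi^3}=\frac{4}{15\pi^2}$ as claimed, which can be cross-checked by evaluating both sides on $f_\star$ via elementary Gamma-function integrals.
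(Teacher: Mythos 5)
Your proposal is correct and follows the same structural path as the paper: reduce $\|u\|_{L^4}^4$ to a sharp bilinear form on the light cone (the Bez--Rogers estimate), introduce polar coordinates so the angular part becomes a quadratic functional on $\mathbb{S}^3$ with kernel $\sqrt{1-\eta_1\cdot\eta_2}$, and then run a spherical-harmonic/Funk--Hecke spectral argument where the decisive fact is that the degree-$0$ eigenvalue is positive and all higher eigenvalues are strictly negative. Where you diverge is purely in how that sign condition is established. The paper treats the eigenvalues $\mathrm{I}_k(d,\lambda)$ for all $d$ and all $\lambda\in(-2,0)$ at once, using the Rodrigues formula for $P_{k,d}$ and $k$ integrations by parts to show $\mathrm{I}_k<0$ for $k\ge1$; this is what makes their Proposition 2.3 a clean, dimension-free statement and also explains exactly where the method breaks down for $d\ge6$ (since $\lambda=3-d<-2$). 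You instead specialise to $\mathbb{S}^3$, recognise the zonal harmonics as Chebyshev $U_n$, and compute the Gegenbauer coefficients of $\sqrt{1-x}$ in closed form, reading off $a_0=32\sqrt2/(15\pi)>0$ and $a_n<0$ for $n\ge1$ from the telescoping formula. Your computation is correct (I checked the product-to-sum evaluation and the regrouping), and it is arguably more concrete, but it buys nothing beyond $d=4$; the paper's IBP argument is the more portable tool and is what lets them also handle the $(5,1)$ endpoint in Section 3 with the same machinery. A second, smaller difference: you re-derive the bilinear estimate via Cauchy--Schwarz on the spheroid with weight $(|\xi||\omega-\xi|)^{-1}$, whereas the paper simply invokes Theorem 2.1 of \cite{BR} together with its extremiser characterisation; your sketch of the equality case ("take logarithms and differentiate tangentially") is exactly the functional-equation argument that \cite{BR} carries out in detail, so citing it would save you that work and tighten the extremiser step. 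The arithmetic leading to $\mathbf{W}(4,\tfrac34)^4=4/(15\pi^2)$ checks out in both presentations.
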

Our proof of Theorem \ref{thm1} relies on a sharp estimate for the one-sided wave propagator from \cite{BR}; this is followed by a further argument using spherical harmonics inspired by recent work of Foschi \cite{Foschi2} on the sharp Stein--Tomas adjoint Fourier restriction theorem for the sphere $\mathbb{S}^2$ in $\mathbb{R}^3$. We also show that such an approach may be used to recover in a new manner the characterisation of extremisers in \cite{BR} for the case $(d,s) = (5,1)$. 

For the full solution of the wave equation, we may deduce the following sharp Sobolev--Strichartz estimate and characterisation of extremal initial data.
\begin{corollary} \label{cor}
The solution of the wave equation $\partial_{tt}u=\Delta u$ on $\mathbb{R}^4 \times \mathbb{R}$ with initial data $(u(0),\partial_tu(0))$ satisfies
\[
\|u\|_{L^4(\mathbb{R}^5)}\leq \left(\frac{1}{10\pi^2}\right)^{\frac{1}{4}}\left(\|u(0)\|_{\dot{H}^{\frac{3}{4}}(\mathbb{R}^4)}^2+\|\partial_t u(0)\|_{\dot{H}^{-\frac{1}{4}}(\mathbb{R}^4)}^2\right)^\frac{1}{2}
\]
and the constant is sharp. Furthermore, the initial data given by
\[
(u(0),\partial_t u(0))=(0, (1+|x|^2)^{-\frac{5}{2}}),
\]
is extremal and generates the set of all extremal initial data under the action of the group generated by the transformations: 
\vspace{2mm}
\newline \textbullet \,\, space-time translations $u(t,x)\to u(t+t_0,x+x_0)$ with $(t_0,x_0) \in \mathbb{R}^{d+1}$;
\newline \textbullet \,\, parabolic dilations $u(t,x)\to u(\mu^2 t, \mu x)$ with $\mu > 0$;
\newline \textbullet \,\, change of scale $u(t,x) \to \mu u(t,x)$ with $\mu > 0$;
\newline \textbullet \,\, phase shift $u(t,x) \to e^{i\theta} u(t,x)$ with $\theta\in\mathbb{R}$.
\end{corollary}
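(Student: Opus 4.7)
The plan is to reduce Corollary \ref{cor} to Theorem \ref{thm1} by splitting the wave solution into half-wave components and exploiting a Fourier-support orthogonality for the square $u^2$. Writing $u = v_+ + v_-$ with $v_\pm = e^{\pm it\sqrt{-\Delta}} F_\pm$ and
\[
F_\pm = \tfrac{1}{2}\bigl(u(0) \mp i(-\Delta)^{-1/2} \partial_t u(0)\bigr),
\]
a Plancherel computation yields the energy identity
\[
\|u(0)\|_{\dot{H}^{3/4}}^2 + \|\partial_t u(0)\|_{\dot{H}^{-1/4}}^2 = 2\bigl(\|F_+\|_{\dot{H}^{3/4}}^2 + \|F_-\|_{\dot{H}^{3/4}}^2\bigr),
\]
and the cross terms in $\|F_+\|^2 + \|F_-\|^2$ cancel regardless of whether the data are real.

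The next observation is that the space-time Fourier transforms of $v_+$ and $v_-$ are measures on the upper and lower light cones. Convolving pairs of these places the Fourier transforms of $v_+^2$, $v_+ v_-$, and $v_-^2$ in the pairwise disjoint regions $\{\tau \geq |\xi|\}$, $\{|\tau| \leq |\xi|\}$, and $\{\tau \leq -|\xi|\}$, respectively. Hence these three terms are orthogonal in $L^2(\mathbb{R}^5)$, giving
\[
\|u\|_{L^4(\mathbb{R}^5)}^4 = \|u^2\|_{L^2}^2 = \|v_+\|_{L^4}^4 + \|v_-\|_{L^4}^4 + 4\|v_+ v_-\|_{L^2}^2.
\]

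I would then apply Theorem \ref{thm1} directly to $\|v_\pm\|_{L^4}$ and use Cauchy--Schwarz followed by Theorem \ref{thm1} to bound $\|v_+ v_-\|_{L^2} \leq \|v_+\|_{L^4}\|v_-\|_{L^4}$. Setting $a = \|F_+\|_{\dot{H}^{3/4}}^2$ and $b = \|F_-\|_{\dot{H}^{3/4}}^2$, the combined estimate reads
\[
\|u\|_{L^4}^4 \leq \mathbf{W}(4,\tfrac{3}{4})^4 (a^2 + b^2 + 4ab) \leq \tfrac{3}{2}\mathbf{W}(4,\tfrac{3}{4})^4 (a+b)^2,
\]
the second step being the elementary bound $2ab \leq \tfrac{1}{2}(a+b)^2$. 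Combined with the energy identity and the value $\mathbf{W}(4,\tfrac{3}{4})^4 = \tfrac{4}{15\pi^2}$, this produces the constant $\tfrac{3}{8}\cdot\tfrac{4}{15\pi^2} = \tfrac{1}{10\pi^2}$, exactly as required.

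Finally, the Fourier transform of $(1+|x|^2)^{-5/2}$ on $\mathbb{R}^4$ is proportional to $e^{-|\xi|}$, so the data $(0,(1+|x|^2)^{-5/2})$ yields $F_\pm$ proportional to the Theorem \ref{thm1} extremizer with $F_- = -F_+$. Since $\widehat{f_\star}$ is real and radial, one checks that $\overline{e^{it\sqrt{-\Delta}}f_\star} = e^{-it\sqrt{-\Delta}}f_\star$, so $|v_+| = |v_-|$ and equality holds throughout, confirming sharpness. For the full characterisation, equality in the chain forces each $F_\pm$ to be a Theorem \ref{thm1} extremizer together with $\|F_+\|_{\dot H^{3/4}} = \|F_-\|_{\dot H^{3/4}}$ (from AM--GM) and $|v_+| \propto |v_-|$ (from Cauchy--Schwarz). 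The main obstacle is matching these constraints against the four-parameter symmetry group: one must verify that the parameters $(a_\pm, b_\pm, c_\pm)$ permitted by Theorem \ref{thm1} in $F_+$ and $F_-$, once the balance conditions are imposed, yield only data in the orbit of $(0,(1+|x|^2)^{-5/2})$ under spacetime translation, parabolic dilation, scaling, and phase shift.
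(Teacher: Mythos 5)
Your proposal follows essentially the same route as the paper: the half-wave decomposition $u=v_++v_-$, orthogonality of $v_+^2$, $v_+v_-$, $v_-^2$ via their disjoint Fourier supports, Cauchy--Schwarz on the cross term, the elementary inequality $a^2+b^2+4ab\le\tfrac32(a+b)^2$, and Theorem~\ref{thm1} applied to each half-wave (the paper's $2(X^2+Y^2+4XY)\le 3(X+Y)^2$ is the same inequality), together with the Plancherel identity $\|F_+\|_{\dot H^{3/4}}^2+\|F_-\|_{\dot H^{3/4}}^2=\tfrac12(\|u(0)\|_{\dot H^{3/4}}^2+\|\partial_t u(0)\|_{\dot H^{-1/4}}^2)$. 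The sharpness computation and the list of equality constraints you identify (each $F_\pm$ an extremiser of Theorem~\ref{thm1}, $\|F_+\|=\|F_-\|$, and $|v_+|\propto|v_-|$) are exactly the content of the verification the paper itself omits and delegates to \cite{Foschi} and \cite{BR}.
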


Our results here also fit into a broader collection of recent papers on sharp Sobolev--Strichartz estimates for dispersive propagators, where, broadly speaking, the question of existence of extremisers is well-understood yet the identification of their shape has only been established in rather special cases (see, for example, \cite{FVV}, \cite{Foschi}, \cite{Jeavons},  \cite{Quilodran}, \cite{Shao}).

In Section 2 we prove Theorem \ref{thm1} and Corollary \ref{cor}, and in Section 3 we adapt our method to obtain an alternative proof of the analogous result from \cite{BR} for the case $(d,s)=(5,1)$.

\begin{acknowledgement}
The authors express their thanks to Jon Bennett for helpful conversations.
\end{acknowledgement}

\section{Proof of Theorem \ref{thm1} and Corollary \ref{cor}}
A key ingredient in the proof of Theorem \ref{thm1} is the following sharp inequality proved in \cite{BR}. Here we use the notation $y^\prime=\frac{y}{|y|}$, for $y\in\mathbb{R}^d \setminus \{0\}$. 
\begin{theorem} \label{t:BR}
Let $d\geq 3$. Then
\begin{align}\label{WV}
\|e^{it\sqrt{-\Delta}} f \|_{L^4(\mathbb{R}^{d+1})}^4  \leq \mathbf{C}(d)\int_{\mathbb{R}^{2d}}|\widehat{f}(y_1)|^2|\widehat{f}(y_2)|^2 |y_1|^{\frac{d-1}{2}}|y_2|^{\frac{d-1}{2}}(1-y_1^\prime\cdot y_2^\prime)^{\frac{d-3}{2}}\,\mathrm{d}y_1\mathrm{d}y_2
\end{align}
holds with sharp constant
\[
\mathbf{C}(d)=2^{-\frac{d-1}{2}}(2\pi)^{-3d+1}|\mathbb{S}^{d-1}|
\]
which is attained if and only if 
\[
\widehat{f}(\xi) = \frac{e^{a|\xi|+b\cdot\xi+c}}{|\xi|},
\]
where $a,c \in\mathbb{C}$, $b\in\mathbb{C}^d$ with $|\emph{Re} (b)|<-\emph{Re}(a)$. 
\end{theorem}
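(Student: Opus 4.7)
The plan is to realise $\|e^{it\sqrt{-\Delta}}f\|_{L^4(\mathbb{R}^{d+1})}^4$ as the squared $L^2$-norm of a bilinear amplitude on the forward light cone, and then apply a weighted Cauchy--Schwarz whose ``dual factor'' is Lorentz-invariant and reduces to an explicit beta integral.

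First I would write $\|u\|_{L^4}^4 = \|u^2\|_{L^2}^2$ and use Plancherel in space-time, noting that the space-time Fourier transform of $u = e^{it\sqrt{-\Delta}}f$ is $2\pi\delta(\tau-|\eta|)\widehat f(\eta)$. A direct calculation then gives
\[
\|u\|_{L^4}^4 = (2\pi)^{-3d+1}\int|A(\eta,\sigma)|^2\,\mathrm{d}\eta\,\mathrm{d}\sigma,\quad A(\eta,\sigma):=\int\widehat f(\xi_1)\widehat f(\eta-\xi_1)\,\delta(|\xi_1|+|\eta-\xi_1|-\sigma)\,\mathrm{d}\xi_1,
\]
reducing matters to bounding $\|A\|_{L^2(\mathrm{d}\eta\mathrm{d}\sigma)}^2$.

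The decisive step is a Cauchy--Schwarz on the inner $\xi_1$-integral with the specific weight $|\xi_1||\eta-\xi_1|$: for each $(\eta,\sigma)$ this yields $|A(\eta,\sigma)|^2 \leq \Phi(\eta,\sigma)\cdot B(\eta,\sigma)$, where
\[
\Phi(\eta,\sigma) := \int\frac{\delta(|\xi_1|+|\eta-\xi_1|-\sigma)}{|\xi_1||\eta-\xi_1|}\mathrm{d}\xi_1
\]
is the self-convolution of the Lorentz-invariant measure $\mathrm{d}\xi/|\xi|$ on the forward cone (and $B$ is the corresponding integral with the reciprocal weight applied to $|\widehat f(\xi_1)|^2|\widehat f(\eta-\xi_1)|^2$). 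By Lorentz invariance $\Phi$ depends only on $M=\sigma^2-|\eta|^2$; a direct calculation in spherical coordinates reducing to a beta integral would give $\Phi = |\mathbb{S}^{d-1}|M^{(d-3)/2}/2^{d-2}$. Integrating the Cauchy--Schwarz bound over $(\eta,\sigma)$ and changing back to $(\xi_1,\xi_2)$ with $\xi_2=\eta-\xi_1$ (the delta absorbing $\mathrm{d}\sigma$), and using the identity $M=2|\xi_1||\xi_2|(1-\xi_1'\cdot\xi_2')$, then produces exactly the weight $|\xi_1|^{(d-1)/2}|\xi_2|^{(d-1)/2}(1-\xi_1'\cdot\xi_2')^{(d-3)/2}$ and the constant $\mathbf{C}(d)=2^{-(d-1)/2}(2\pi)^{-3d+1}|\mathbb{S}^{d-1}|$.

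To identify the extremisers I would analyse equality in the Cauchy--Schwarz step, which forces $\widehat f(\xi_1)\widehat f(\xi_2)|\xi_1||\xi_2|$ to depend only on $(\xi_1+\xi_2,|\xi_1|+|\xi_2|)$. Setting $G(\xi):=|\xi|\widehat f(\xi)$ and taking logarithms reduces this to a Cauchy-type functional equation $\log G(\xi_1)+\log G(\xi_2) = h(\xi_1+\xi_2,|\xi_1|+|\xi_2|)$ on the forward null cone, whose measurable solutions are affine-linear in the 4-vector $(|\xi|,\xi)$; this forces $\widehat f(\xi) = e^{a|\xi|+b\cdot\xi+c}/|\xi|$, with $|\mathrm{Re}\,b|<-\mathrm{Re}\,a$ being precisely the integrability condition for finiteness of the right-hand side. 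The hard parts will be the beta-function evaluation of $\Phi$ (in particular verifying $|\mathbb{S}^{d-2}|\int_{-1}^{1}(1-s^2)^{(d-3)/2}\mathrm{d}s = |\mathbb{S}^{d-1}|$) and the rigorous classification of measurable solutions to the functional equation on the cone.
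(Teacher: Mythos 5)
The paper does not prove Theorem~\ref{t:BR}; the estimate, the constant and the extremiser classification are imported verbatim from Bez--Rogers \cite{BR}, so there is no internal proof to compare against. Your sketch is nonetheless a correct reconstruction of the argument in \cite{BR}, which descends from Foschi's bilinear $L^2$ method. Space-time Plancherel gives $\|u\|_{L^4}^4 = (2\pi)^{1-3d}\|A\|_{L^2(\mathrm{d}\eta\,\mathrm{d}\sigma)}^2$ with your $A$; Cauchy--Schwarz with weight $|\xi_1||\eta-\xi_1|$ is chosen precisely so that the dual factor $\Phi$ is (up to a constant) the self-convolution of the Lorentz-invariant cone measure $\mathrm{d}\xi/|\xi|$; Lorentz covariance then forces $\Phi$ to depend only on $M = \sigma^2 - |\eta|^2$, and evaluating in the rest frame $(\eta,\sigma)=(0,\sqrt{M})$ gives $\Phi = |\mathbb{S}^{d-1}|M^{(d-3)/2}/2^{d-2}$ with no beta integral needed (the identity $|\mathbb{S}^{d-2}|\int_{-1}^{1}(1-s^2)^{(d-3)/2}\,\mathrm{d}s=|\mathbb{S}^{d-1}|$ is what appears if one instead computes $\Phi$ directly in the lab frame). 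With $M = 2|\xi_1||\xi_2|(1-\xi_1'\cdot\xi_2')$ the constant bookkeeping closes out exactly to $\mathbf{C}(d)=2^{-(d-1)/2}(2\pi)^{-3d+1}|\mathbb{S}^{d-1}|$, and your Fourier conventions agree with the paper's.

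The piece you flag but do not supply is genuinely the hard part: the rigorous classification of measurable solutions of $G(\xi_1)G(\xi_2)=H(\xi_1+\xi_2,|\xi_1|+|\xi_2|)$ with $G=|\cdot|\,\widehat f$, when the equation is only known to hold for almost every pair on the cone. This is where the full extremiser characterisation (as opposed to merely the sharp constant) actually lives, and it requires the locally-integrable Cauchy-equation machinery developed in \cite{Foschi} and adapted in \cite{BR}; it cannot be dismissed as routine. Two small slips: ``4-vector'' is literal only for $d=3$, and $|\mathrm{Re}(b)|<-\mathrm{Re}(a)$ is more transparently the condition that $\widehat f$ decay exponentially at infinity (equivalently $f\in\dot{H}^s$), rather than simply ``finiteness of the right-hand side.''
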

The one-sided wave propagator is given by
\[
e^{it\sqrt{-\Delta}}f(x)=\frac{1}{(2\pi)^d}\int_{\mathbb{R}^d}e^{ix\cdot\xi+it|\xi|}\widehat{f}(\xi)\,\mathrm{d}\xi, \quad (x,t) \in \mathbb{R}^d \times \mathbb{R},
\]
for appropriate functions $f$, and the Fourier transform we use is
\[
\widehat{f}(\xi)=\int_{\mathbb{R}^d}f(x)e^{-ix\cdot\xi}\,\mathrm{d}x, \quad \xi\in\mathbb{R}^d.
\]

Our observation is that if we introduce polar coordinates for $y_1$ and $y_2$ in \eqref{WV}, then we are led to real-valued functionals of the form
\begin{equation*}
H_{\lambda}(g)=\int_{\mathbb{S}^{d-1}\times \mathbb{S}^{d-1}}g(\eta_1)\overline{g(\eta_2)}|\eta_1-\eta_2|^{-\lambda}\,\mathrm{d}\eta_1\mathrm{d}\eta_2
\end{equation*}
for $g\in L^1(\mathbb{S}^{d-1})$ and $\lambda \leq 0$. This is reminiscent of recent work of Foschi \cite{Foschi2} where a sharp upper bound for $H_{-1}$ was established for $d=3$. For Theorem \ref{thm1} we need an analogous result for $d = 4$; this is contained in the subsequent proposition, which we state more generally to highlight why our approach only works as it stands for $d=4,5$.

First, we introduce the beta function
\[
\mathrm{B}(x,y)=\int_0^1 t^{x-1}(1-t)^{y-1}\,\mathrm{d}t, \quad x,y>0,
\]
and $\mu_g$ to denote the average value of $g$ on the sphere. Also, we use $\mathbf{1}$ for the function which is identically equal to one on the sphere.
\begin{proposition}\label{claim1}
Let $-2<\lambda < 0$, and let $g$ be any $L^1$ function on $\mathbb{S}^{d-1}$. Then, 
\[
H_\lambda(g)\leq H_\lambda(\mu_g \mathbf{1})=  2^{d-2-\lambda}\mathrm{B}(\tfrac{d-1-\lambda}{2},\tfrac{d-1}{2})\frac{|\mathbb{S}^{d-2}|}{\left|\mathbb{S}^{d-1}\right|}\left|\int_{\mathbb{S}^{d-1}}g\right|^2.
\]
Further, equality holds if and only if $g$ is constant.
\end{proposition}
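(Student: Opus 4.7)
The plan is to expand $g$ in spherical harmonics and diagonalise $H_\lambda$ via the Funk--Hecke formula, working initially with $g \in L^2(\mathbb{S}^{d-1})$ and at the end extending to $L^1$ by continuity (the kernel $|\eta_1-\eta_2|^{-\lambda}$ is bounded on the sphere since $-\lambda > 0$). Writing $g = \sum_{k\geq 0} g_k$ for the decomposition of $g$ into degree-$k$ spherical harmonics, the fact that $|\eta_1-\eta_2|^{-\lambda} = (2-2\eta_1\cdot\eta_2)^{-\lambda/2}$ depends only on $\eta_1\cdot\eta_2$ gives via Funk--Hecke the diagonalisation
\[
H_\lambda(g) = \sum_{k\geq 0} \lambda_k \|g_k\|_{L^2(\mathbb{S}^{d-1})}^2, \qquad \lambda_k = |\mathbb{S}^{d-2}|\int_{-1}^{1} (2-2t)^{-\lambda/2}\frac{C_k^{(d-2)/2}(t)}{C_k^{(d-2)/2}(1)}(1-t^2)^{(d-3)/2}\,\mathrm{d}t,
\]
where $C_k^{\nu}$ are the Gegenbauer polynomials. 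Thus the claimed bound $H_\lambda(g)\leq \lambda_0\|g_0\|_{L^2}^2=H_\lambda(\mu_g\mathbf{1})$ is equivalent to $\lambda_k\leq 0$ for every $k\geq 1$, with strict inequality delivering the equality case.

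For the top eigenvalue, the substitution $u=(1-t)/2$ turns the integral for $\lambda_0$ directly into a beta integral, producing $\lambda_0=|\mathbb{S}^{d-2}|\cdot 2^{d-2-\lambda}\mathrm{B}(\frac{d-1-\lambda}{2},\frac{d-1}{2})$. Since $\|g_0\|_{L^2}^2=|\int g|^2/|\mathbb{S}^{d-1}|$, this reproduces the stated closed form for $H_\lambda(\mu_g\mathbf{1})$.

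The core step is showing $\lambda_k<0$ for $k\geq 1$. I would invoke the Rodrigues-type formula
\[
C_k^{(d-2)/2}(t)(1-t^{2})^{(d-3)/2}=\kappa_k\,\frac{\mathrm{d}^k}{\mathrm{d}t^k}\bigl[(1-t^{2})^{k+(d-3)/2}\bigr],
\]
with an explicit constant $\kappa_k$ carrying sign $(-1)^k$, and then integrate by parts $k$ times; the boundary contributions vanish because $(1-t^2)^{k+(d-3)/2}$ and all intermediate lower-order derivatives vanish at $t=\pm 1$ to an order that beats the mild singularity of the derivatives of $(1-t)^{-\lambda/2}$ at $t=1$. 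This transfers the $k$ derivatives onto $(2-2t)^{-\lambda/2}$. Setting $\alpha:=-\lambda/2\in(0,1)$, a direct computation gives
\[
\frac{\mathrm{d}^k}{\mathrm{d}t^k}(1-t)^{\alpha}=(-1)^k\,\alpha(\alpha-1)(\alpha-2)\cdots(\alpha-k+1)(1-t)^{\alpha-k},
\]
and the product $\alpha(\alpha-1)\cdots(\alpha-k+1)$ contains exactly one positive factor ($\alpha$) and $k-1$ strictly negative ones, so its sign is $(-1)^{k-1}$; hence $\tfrac{\mathrm{d}^k}{\mathrm{d}t^k}(1-t)^{\alpha}$ is strictly negative on $(-1,1)$. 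Combining the three sign contributions from $\kappa_k$, from the $k$-fold integration by parts, and from this derivative yields $\lambda_k<0$ for every $k\geq 1$.

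The principal obstacle is the careful sign bookkeeping across the Rodrigues formula, the $k$-fold integration by parts, and the explicit derivative; the whole scheme rests on the restriction $\alpha\in(0,1)$, i.e.\ $\lambda\in(-2,0)$, which is precisely what ensures that every derivative of $(1-t)^{\alpha}$ of order $\geq 1$ has a uniform sign (for larger $|\lambda|$ the signs alternate, which is why the method has no hope of extending). Given $\lambda_k<0$ for $k\geq 1$, the decomposition $H_\lambda(g)=\lambda_0\|g_0\|^2+\sum_{k\geq 1}\lambda_k\|g_k\|^2$ yields the inequality, and equality forces $\|g_k\|^2=0$ for every $k\geq 1$, hence $g\equiv \mu_g\mathbf{1}$. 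A standard density argument transfers both the inequality and the equality characterisation from $L^2(\mathbb{S}^{d-1})$ to $L^1(\mathbb{S}^{d-1})$.
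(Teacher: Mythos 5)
Your proposal is correct and follows essentially the same route as the paper: spherical harmonic decomposition, diagonalisation via Funk--Hecke, the beta-integral evaluation of the top eigenvalue, and the Rodrigues formula plus $k$-fold integration by parts to establish strict negativity of the higher eigenvalues, with the density argument on $L^1$ at the end. The only differences are cosmetic — you phrase things in terms of Gegenbauer polynomials $C_k^{(d-2)/2}$ rather than the normalised Legendre polynomials $P_{k,d}$, and you compute $\frac{\mathrm d^k}{\mathrm dt^k}(1-t)^\alpha$ in one shot rather than tracking the sign change IBP-by-IBP as the paper does; the sign bookkeeping is equivalent.
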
 
Following Foschi \cite{Foschi2}, our proof of Proposition \ref{claim1} is based on a spectral argument using a spherical harmonic decomposition of $g$ and the Funk--Hecke formula to obtain explicit expressions for the eigenvalues. We remark that similar types of arguments have proved profitable in understanding sharp forms of other important estimates; see, for example, \cite{Beckner}, \cite{BezSugimoto} and \cite{FrankLieb}. The connection to the latter paper deserves a further remark; indeed, in \cite{FrankLieb}, Frank and Lieb provide a reproof of the sharp Hardy--Littlewood--Sobolev inequality on the sphere, originally due to Lieb \cite{Lieb}, which gives the sharp upper bound on $H_\lambda$ for $0 < \lambda < d-1$ in terms of the $L^p$ norm of $g$, where $p = \frac{2(d-1)}{2(d-1)-\lambda}$.



The information we need concerning the eigenvalues is contained in the following lemma. Here we use $P_{k,d}$ to denote the Legendre polynomial of degree $k$ in $d$ dimensions, which may be defined using the generating function 
\[
\frac{1}{(1+r^2-2rt)^{\frac{d-2}{2}}}=\sum_{k=0}^\infty \binom{k+d-3}{d-3}r^k P_{k,d}(t), \quad |r|<1, |t|\leq 1.
\]
\begin{lemma}\label{lemma1}
Let $-2<\lambda < 0$, and define
\[
\mathrm{I}_k(d,\lambda) = |\mathbb{S}^{d-2}|\int_{-1}^1 (1-t)^{-\frac{\lambda}{2}}P_{k,d}(t)(1-t^2)^{\frac{d-3}{2}}\,\mathrm{d}t.
\]
Then
\[
\mathrm{I}_0(d,\lambda) = |\mathbb{S}^{d-2}|2^{d-2-\frac{\lambda}{2}}\mathrm{B}(\tfrac{d-1-\lambda}{2},\tfrac{d-1}{2}) > 0
\]
and $\mathrm{I}_k(d,\lambda) < 0$ for all $k\geq 1$. 
\end{lemma}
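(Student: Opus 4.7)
The plan is to dispatch the $k=0$ case by direct calculation and handle $k\geq 1$ via a Rodrigues formula followed by integration by parts, turning the sign question into an easy calculation with a falling factorial.

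For $k=0$ the computation is routine: factoring $(1-t^2)^{(d-3)/2}=(1-t)^{(d-3)/2}(1+t)^{(d-3)/2}$ and substituting $t=2s-1$ recasts the integral as a beta function and produces the stated formula; positivity is automatic since $-2<\lambda<0<d-1$.

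For $k\geq 1$, the key step is to identify $P_{k,d}$ with a normalized Jacobi polynomial, $P_{k,d}(t)=P_k^{(\alpha,\alpha)}(t)/\binom{k+\alpha}{k}$ with $\alpha=\tfrac{d-3}{2}$ (the normalization factor is positive for $d\geq 3$). After substituting $t=1-2s$ and invoking the Rodrigues formula
\[
P_k^{(\alpha,\alpha)}(1-2s)=\frac{1}{k!}s^{-\alpha}(1-s)^{-\alpha}\frac{d^k}{ds^k}\!\left[s^{k+\alpha}(1-s)^{k+\alpha}\right],
\]
the weight factor conveniently cancels the $s^{-\alpha}(1-s)^{-\alpha}$, and $\mathrm{I}_k(d,\lambda)$ becomes, up to a positive multiplicative constant, $\int_0^1 s^{-\lambda/2}\tfrac{d^k}{ds^k}[s^{k+\alpha}(1-s)^{k+\alpha}]\,ds$. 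I would then integrate by parts $k$ times, checking that the boundary terms all vanish (which follows from $-\lambda/2>0$ together with $\alpha\geq 0$), to obtain
\[
(-1)^k\prod_{j=0}^{k-1}\!\left(-\tfrac{\lambda}{2}-j\right)\int_0^1 s^{\alpha-\lambda/2}(1-s)^{k+\alpha}\,ds,
\]
again up to a positive constant.

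The sign analysis is then immediate, and this is where the hypothesis $-2<\lambda<0$ enters decisively: since $-\lambda/2\in(0,1)$, the first factor in the product is positive while each of the remaining $k-1$ factors is negative, giving a product of sign $(-1)^{k-1}$. Combined with the $(-1)^k$ from the integration by parts the total sign is $-1$, and the surviving beta integral is strictly positive, so $\mathrm{I}_k(d,\lambda)<0$ for all $k\geq 1$. The main obstacle is purely bookkeeping---tracking the chain of positive normalization constants and verifying the boundary terms in the repeated integration by parts---and the restriction of the proposition to $-2<\lambda<0$ is explained by the fact that outside this range additional factors in the falling factorial flip sign, breaking the argument.
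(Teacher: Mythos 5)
Your proof is correct and takes essentially the same route as the paper: a Rodrigues formula for $P_{k,d}$ followed by $k$ integrations by parts, with the sign of $\mathrm{I}_k$ for $k\geq 1$ read off from the single positive factor $-\tfrac{\lambda}{2}\in(0,1)$ against the $k-1$ negative factors in the falling factorial. The only cosmetic difference is that you pass to the variable $s=\tfrac{1-t}{2}$ and use the Jacobi Rodrigues formula $P_k^{(\alpha,\alpha)}$ with $\alpha=\tfrac{d-3}{2}$, while the paper works directly in $t$ with the Gegenbauer/Legendre form; these are the same identity in different coordinates.
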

\begin{remark}
The inequality in Proposition \ref{claim1} is false if $\lambda<-2$. This is because $(-1)^k \mathrm{I}_k(d,\lambda) > 0$ for $k \geq 0$ up to some threshold; for example $\mathrm{I}_2(d,\lambda)>0$ for such $\lambda$. This is the reason why our approach does not allow us to prove a generalisation of Theorem \ref{thm1} to dimension 6 and above (for general $d$, we should take $\lambda = 3-d$). A similar obstacle arises in \cite{CO} when generalising Foschi's argument to obtain the result in \cite{Foschi2} in higher dimensions. At the endpoint $\lambda=-2$ the sharp inequality in Proposition \ref{claim1} still holds but one also has equality for certain non-constant functions $g$. This turns out not to matter for our application and we can recover the sharp inequality and characterisation of extremisers for \eqref{WSSt} in the case $(d,s)=(5,1)$ first proved in \cite{BR}; we expand upon this point in Section 3.
\end{remark}

Assume Lemma \ref{lemma1} to be true for the moment, then to prove Proposition \ref{claim1}, we first observe that it suffices by density and continuity of the functional $H_\lambda$ on $L^1(\mathbb{S}^{d-1})$ to consider $g\in L^2(\mathbb{S}^{d-1})$. We may then write $g=\sum_{k\geq 0} Y_k$ as a sum of orthogonal spherical harmonics; upon which it follows that
\begin{equation}\label{int1}
H_\lambda(g)=2^{-\frac{\lambda}{2}}\sum_{k\geq 0}\int_{\mathbb{S}^{d-1}}g(\eta_1)\int_{\mathbb{S}^{d-1}}\overline{Y_k(\eta_2)} (1-\eta_1\cdot\eta_2)^{-\frac{\lambda}{2}}\,\mathrm{d}\eta_2\mathrm{d}\eta_1.
\end{equation}
To deal with the inner integral in \eqref{int1} we use the Funk--Hecke formula for the spherical harmonics, which states that
\begin{equation*} 
\int_{\mathbb{S}^{d-1}}Y_k(\eta)F(\omega\cdot\eta)\,\mathrm{d}\eta=\Lambda_kY_k(\omega)
\end{equation*}
for $\omega\in\mathbb{S}^{d-1}$ and $k\in\mathbb{N}_0$, where
\[
\Lambda_k:=|\mathbb{S}^{d-2}|\int_{-1}^1F(t)P_{k,d}(t)(1-t^2)^{\frac{d-3}{2}}\,\mathrm{d}t,
\]
provided that $F \in L^1([-1,1],(1-t^2)^{\frac{d-3}{2}})$ (see \cite{AH}, pp. 35--36). It then follows that the inner integral in \eqref{int1} evaluates to a (positive) constant multiple of $\mathrm{I}_k(d,\lambda) \overline{Y_k(\eta_1)}$. Precisely, using the orthogonality of the spherical harmonics of different degrees and Lemma \ref{lemma1},
\begin{equation*}
H_{\lambda}(g)=2^{-\frac{\lambda}{2}}\sum_{k\geq 0}\mathrm{I}_k(d,\lambda) \int_{\mathbb{S}^{d-1}} \left|Y_{k}(\eta)\right|^2\,\mathrm{d}\eta \leq 2^{-\frac{\lambda}{2}}\mathrm{I}_0(d,\lambda) \int_{\mathbb{S}^{d-1}} \left|Y_{0}\right|^2\,\mathrm{d}\eta =H_{\lambda}(\mu_g \mathbf{1}).
\end{equation*}
Equality is clearly satisfied for $g = Y_0$ or equivalently $g$ which are constant. There are no further cases of equality since $\mathrm{I}_k(d,\lambda)$ is strictly negative for $k \geq 1$, by Lemma \ref{lemma1}. 

Using the expression for $\mathrm{I}_0(d,\lambda)$ in Lemma \ref{lemma1} and the definition of $\mu_g$, it is then easy to derive the claimed expression for $H_\lambda(\mu_g\mathbf{1})$, which completes the proof of Proposition \ref{claim1}.

\proof[Proof of Lemma \ref{lemma1}] By a simple change of variables, it is easily checked that $\mathrm{I}_0(d,\lambda)$ satisfies the claimed equality in terms of the beta function. To prove the strict negativity of $\mathrm{I}_k(d,\lambda)$ for $k \geq 1$, we first use the Rodrigues formula for $P_{k,d}$ (see \cite{AH}, pp. 37), which states
\begin{equation*}
(1-t^2)^{\frac{d-3}{2}}P_{k,d}(t)=(-1)^k R_{k,d}\frac{\mathrm{d}^k}{\mathrm{d}t^k}(1-t^2)^{k+\frac{d-3}{2}}, \quad t\in \left[-1,1\right],
\end{equation*}
with
\[R_{k,d}=\frac{\Gamma(\frac{d-1}{2})}{2^k\Gamma(k+\frac{d-1}{2})}>0,\]
to obtain that
\[\mathrm{I}_k(d,\lambda) =(-1)^k R_{k,d} \int_{-1}^1 (1-t)^{-\frac{\lambda}{2}}\frac{\mathrm{d}^k}{\mathrm{d}t^k}(1-t^2)^{k+\frac{d-3}{2}}\,\mathrm{d}t.\]
Integrating by parts, the boundary terms disappear and we obtain
\begin{equation}\label{ibp-eq}
\mathrm{I}_k(d,\lambda) = (-1)^{k} R_{k,d}\left(-\frac{\lambda}{2}\right)\int_{-1}^1 (1-t)^{-\frac{\lambda}{2}-1}\frac{\mathrm{d}^{k-1}}{\mathrm{d}t^{k-1}}(1-t^2)^{k+\frac{d-3}{2}}\,\mathrm{d}t.
\end{equation}
Since $-\frac{\lambda}{2}>0$, the sign of the constant in front of the integral in \eqref{ibp-eq} does not change at the first integration by parts. However, since $-\frac{\lambda}{2}-1<0$, at every integration by parts step after the first, we will incur a sign change. Hence, integrating by parts a total of $k$ times, we see that $\mathrm{I}_k(d,\lambda)$ evaluates to
\begin{align*}
-C_k(d,\lambda) \int_{-1}^1 (1-t)^{-\frac{\lambda}{2}-k}(1-t^2)^{k+\frac{d-3}{2}}\,\mathrm{d}t
\end{align*}
for some strictly positive constant $C_k(d,\lambda)$. Hence $\mathrm{I}_k(d,\lambda) < 0$ as claimed.
\endproof
\proof[Proof of Theorem \ref{thm1}] If we set $d=4$ and write the integral on the right-hand side of \eqref{WV} using polar coordinates, we get
\begin{align*}
\int_{(0,\infty)^2} \int_{(\mathbb{S}^3)^2} |\widehat{f}(r_1\eta_1)|^2|\widehat{f}(r_2\eta_2)|^2 r_1^{\frac{9}{2}}r_2^{\frac{9}{2}}(1-\eta_1\cdot\eta_2)^{\frac{1}{2}} \,\mathrm{d}\eta\mathrm{d}r = \frac{1}{\sqrt{2}} \int_{\mathbb{S}^3\times\mathbb{S}^3}g(\eta_1)g(\eta_2)|\eta_1-\eta_2|\,\mathrm{d}\eta_1\mathrm{d}\eta_2,
\end{align*}
where
\begin{equation}\label{g-def}
g(\eta):=\int_0^\infty |\widehat{f}(r\eta)|^2r^{\frac{9}{2}}\,\mathrm{d}r,
\end{equation}
for $\eta \in \mathbb{S}^3$. By Plancherel's theorem, 
\[
\int_{\mathbb{S}^3} g  = (2\pi)^4\|f\|_{\dot{H}^{\frac{3}{4}}(\mathbb{R}^4)}^2.
\]
If we then apply \eqref{WV} and take $\lambda=-1$ in Proposition \ref{claim1}, we have
\begin{equation}\label{eqchain}
\|e^{it\sqrt{-\Delta}}f\|_{L^4(\mathbb{R}^5)}^4\leq \frac{\mathbf{C}(4)}{\sqrt{2}} H_{-1}(g) \leq \frac{\mathbf{C}(4)}{\sqrt{2}} H_{-1}(\mathbf{1}) |\mu_g|^2 = \frac{4}{15\pi^2} \|f\|_{\dot{H}^{\frac{3}{4}}(\mathbb{R}^4)}^4,
\end{equation}
as claimed. The first inequality in \eqref{eqchain} is an equality when $f$ extremises inequality \eqref{WV}, and the second is an equality when the function $g$ defined by \eqref{g-def} is constant on $\mathbb{S}^3$. In particular, equality holds in both cases for $f$ given by 
\[
\widehat{f}(\xi)=\frac{e^{a|\xi|+ib\cdot\xi+c}}{|\xi|},
\]
where $a, c\in\mathbb{C}$ such that $\text{Re}(a)<0$, and $b\in \mathbb{R}^4$. Note that, for such $f$, we have that $|\widehat{f}|$ is radial (and hence $g$ is constant).

On the other hand, if $f$ is an extremiser for \eqref{eq1}, then we must have equality at both of the inequalities in \eqref{eqchain}. From the first inequality, using Theorem \ref{t:BR}, we see that necessarily
\[
\widehat{f}(\xi) = \frac{e^{a|\xi| + b \cdot \xi + c}}{|\xi|}\,,
\]
where $a, c \in \mathbb{C}$, $b \in \mathbb{C}^4$ and $\mbox{Re}(a) < - |\mbox{Re}(b)|$. However, in this case,
\[
g(\eta) = e^{2\text{Re}(c)} \int_0^\infty e^{2r(\text{Re}(a) + \text{Re}(b) \cdot \eta)} r^\frac{5}{2} \, \mathrm{d}r = \frac{e^{2\text{Re}(c)} \int_0^\infty e^{-r} r^\frac{5}{2} \, \mathrm{d}r}{[-2(\text{Re}(a) + \text{Re}(b) \cdot \eta)]^{\frac{7}{2}}}
\]
and for this to be constant in $\eta$, we must have $\mbox{Re}(b) = 0$. This completes the proof of Theorem \ref{thm1}.
\endproof

\begin{proof}[Proof of Corollary \ref{cor}]
Write the solution of the wave equation $u$ as $e^{it\sqrt{-\Delta}}f_+ + e^{-it\sqrt{-\Delta}}f_-$, where the functions $f_+$ and $f_-$ are defined using the initial data by
\[
u(0)=f_++f_-, \quad \partial_t u(0)=i\sqrt{-\Delta}(f_+-f_-).
\]
Using orthogonality and the Cauchy--Schwarz inequality on $L^2(\mathbb{R}^5)$, we get
\begin{align*}
\|u\|_{L^4(\mathbb{R}^5)}^4 & = \|e^{it\sqrt{-\Delta}} f_+ \|_{L^4(\mathbb{R}^5)}^4 + \|e^{-it\sqrt{-\Delta}} f_- \|_{L^4(\mathbb{R}^5)}^4 + 4\|e^{it\sqrt{-\Delta}} f_+ e^{-it\sqrt{-\Delta}} f_-\|_{L^2(\mathbb{R}^5)}^2 \\
& \leq \|e^{it\sqrt{-\Delta}} f_+ \|_{L^4(\mathbb{R}^5)}^4 + \|e^{-it\sqrt{-\Delta}} f_- \|_{L^4(\mathbb{R}^5)}^4 + 4\|e^{it\sqrt{-\Delta}} f_+ \|_{L^4(\mathbb{R}^5)}^2\|e^{-it\sqrt{-\Delta}} f_- \|_{L^4(\mathbb{R}^5)}^2.
\end{align*}
The basic inequality $2(X^2 + Y^2 + 4XY) \leq 3(X+Y)^2$ and Theorem \ref{thm1}, which clearly also holds for $e^{-it\sqrt{-\Delta}}$, now yield
\[
\|u\|_{L^4(\mathbb{R}^5)}^4 \leq \frac{3}{8} \mathbf{W}(4,\tfrac{3}{4})^4 \left(\|u(0)\|_{\dot{H}^{\frac{3}{4}}(\mathbb{R}^4)}^2+\|\partial_t u(0)\|_{\dot{H}^{-\frac{1}{4}}(\mathbb{R}^4)}^2\right)^2
\]
which gives the claimed inequality in Corollary \ref{cor}. 

The above argument was used by Foschi in \cite{Foschi} when $(d,s) = (3,\frac{1}{2})$ and in \cite{BR} when $(d,s) = (5,1)$. The characterisation of extremisers also follows in the analogous way, and so we refer the reader to \cite{Foschi} or \cite{BR} and omit the details. 
\end{proof}

\section{Five spatial dimensions}
We conclude by presenting an alternative derivation of the sharp constant and characterisation of extremisers for the estimate \eqref{WSSt} in the case $(d,s)=(5,1)$, in the spirit of the argument in the previous section. 

For this, we need an appropriate modification of Proposition \ref{claim1} and thus Lemma \ref{lemma1} for $d=5$ and $\lambda = -2$. However, it is straightforward to see that
\[
\mathrm{I}_k(5,-2) = |\mathbb{S}^{3}|\int_{-1}^1 (1-t) P_{k,5}(t)(1-t^2) \,\mathrm{d}t
\]
satisfies $\mathrm{I}_0(5,-2) > 0$, $\mathrm{I}_1(5,-2) < 0$ and $\mathrm{I}_k(5,-2)$ vanishes for all $k \geq 2$. Thus
\[
H_{-2}(g) = \frac{\mathrm{I}_0(5,-2) }{2} \|Y_0\|_{L^2(\mathbb{S}^{4})}^2 + \frac{\mathrm{I}_1(5,-2)}{2} \|Y_1\|_{L^2(\mathbb{S}^{4})}^2 \leq H_{-2}(\mathbf{1})|\mu_g|^2,
\]
where $g =\sum_{k\geq 0} Y_k$ is the expansion of $g$ into spherical harmonics. Here, equality holds if $g$ is constant, but unlike the estimates in Proposition \ref{claim1}, there are further cases of equality.

Taking $f \in \dot{H}^1(\mathbb{R}^5)$ and applying this with $g$ given by
\begin{equation}\label{G-def2}
g(\eta) :=\int_0^\infty |\widehat{f}(r\eta)|^2r^{6}\,\mathrm{d}r
\end{equation}
for $\eta\in\mathbb{S}^4$, we have 
\begin{equation*}
\|e^{it\sqrt{-\Delta}}f\|_{L^4(\mathbb{R}^6)}^4\leq \frac{\mathbf{C}(5)}{2}H_{-2}(g) \leq  \frac{\mathbf{C}(5)}{2}H_{-2}(\mathbf{1}) |\mu_g|^2   = \frac{1}{24\pi^2} \|f\|_{\dot{H}^{1}(\mathbb{R}^5)}^4.
\end{equation*}
As before, equality holds in both inequalities for $f$ given by 
\[
\widehat{f}(\xi)=\frac{e^{a|\xi|+ib\cdot\xi+c}}{|\xi|},
\]
where $a, c\in\mathbb{C}$ such that $\text{Re}(a)<0$, and $b\in \mathbb{R}^5$. 

Conversely, if $f$ is an extremiser, then Theorem \ref{t:BR} implies that
\begin{equation*}
\widehat{f}(\xi) = \frac{e^{a|\xi| + b \cdot \xi + c}}{|\xi|}\,,
\end{equation*}
where $a, c \in \mathbb{C}$, $b \in \mathbb{C}^5$ and $\text{Re}(a) < - |\text{Re}(b)|$. Substituting our function $f$ into \eqref{G-def2}, we see that it suffices to consider 
\begin{align*}
g(\eta) = e^{2\text{Re}(c)}\int_0^\infty e^{2r(\text{Re}(a)+\text{Re}(b)\cdot\eta)}r^4\,\mathrm{d}r = \frac{e^{2\text{Re}(c)}}{32(-\text{Re}(a)-\text{Re}(b) \cdot\eta)^5}\int_0^\infty e^{-r}r^4\,\mathrm{d}r.
\end{align*}
Since $\mathrm{I}_1(5,-2) < 0$, we must have that $\|Y_1\|_{L^2}=0$. On the other hand, using the projection $\Pi$ onto the space of spherical harmonics of degree one given by
\[
\Pi g(\eta) \mapsto \frac{5}{|\mathbb{S}^4|} \int_{\mathbb{S}^4} P_{1,5}(\eta \cdot \omega) g(\omega) \, \mathrm{d}\omega
\]
for each $\eta \in \mathbb{S}^4$, it follows that
\begin{equation} \label{e:Y1}
Y_1(\eta) = C \int_{\mathbb{S}^4}  \frac{P_{1,5}(\eta \cdot \omega)}{(-\text{Re}(a)-\text{Re}(b) \cdot\eta)^5} \, \mathrm{d}\omega
\end{equation}
for some absolute constant $C > 0$. If we suppose, for a contradiction, that $\text{Re}(b) \neq 0$, then an application of the Funk--Hecke formula implies that 
\begin{equation} \label{e:Y1'}
Y_1(\eta) = C P_{1,5}(\eta \cdot \text{Re}(b)') \int_{-1}^1 \frac{t(1-t^2)}{(1+At)^5}\,\mathrm{d}t
\end{equation}
for each $\eta \in \mathbb{S}^4$, where $A:=\frac{|\text{Re}(b)|}{\text{Re}(a)}\in \left(-1,0\right]$. The absolute constants $C > 0$ in \eqref{e:Y1} and \eqref{e:Y1'} may not be the same. Since $Y_1$ vanishes almost everywhere on $\mathbb{S}^4$, it follows that the integral on the right-hand side of \eqref{e:Y1'} vanishes. This forces $A = 0$, which gives the desired contradiction. 

The above argument provides an alternative proof of the following, and at the level of the proof, unifies it with Theorem \ref{thm1}.
\begin{theorem}[\cite{BR}, Corollary 2.2]
The one-sided wave propagator satisfies the estimate
\begin{equation*}
\|e^{it\sqrt{-\Delta}}f\|_{L^4(\mathbb{R}^6)} \leq \mathbf{W}(5,1)\left\|f\right\|_{\dot{H}^{1}(\mathbb{R}^5)}
\end{equation*}
with constant
\[\mathbf{W}(5,1)=\left(\frac{1}{24\pi^2}\right)^{\frac{1}{4}}.\]
The constant is sharp and is attained if and only if 
\[\widehat{f}(\xi)=\frac{e^{a|\xi|+ib\cdot\xi+c}}{|\xi|},\]
where $a, c\in\mathbb{C}$ such that $\operatorname{Re}(a)<0$, and $b\in \mathbb{R}^5$.
\end{theorem}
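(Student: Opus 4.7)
The plan is to mirror the strategy used for Theorem \ref{thm1}, with the parameter $\lambda = 3-d$ now at the endpoint $\lambda = -2$ of the range covered by Proposition \ref{claim1}. Starting from Theorem \ref{t:BR} with $d = 5$, I would pass to polar coordinates on $\mathbb{R}^5 \times \mathbb{R}^5$ and write the right-hand side as a constant multiple of $H_{-2}(g)$, where $g(\eta) := \int_0^\infty |\widehat{f}(r\eta)|^2 r^{6}\,\mathrm{d}r$. Plancherel identifies $\int_{\mathbb{S}^4} g$ with a fixed multiple of $\|f\|_{\dot{H}^1(\mathbb{R}^5)}^2$, so once the sharp bound $H_{-2}(g) \leq H_{-2}(\mathbf{1})|\mu_g|^2$ is established, the sharp constant $\mathbf{W}(5,1)$ drops out.

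The core spectral step is an analogue of Proposition \ref{claim1} at $\lambda = -2$. The Rodrigues-plus-integration-by-parts proof of Lemma \ref{lemma1} breaks down here because $-\lambda/2 - 1 = 0$, so I would instead compute $\mathrm{I}_k(5,-2) = |\mathbb{S}^3| \int_{-1}^1 (1-t) P_{k,5}(t)(1-t^2)\,\mathrm{d}t$ directly: since $(1-t^2)$ is precisely the orthogonality weight for the Legendre polynomials in dimension $5$ and $(1-t)$ has degree one, orthogonality gives $\mathrm{I}_k(5,-2) = 0$ for $k \geq 2$, while direct evaluation produces $\mathrm{I}_0(5,-2) > 0$ and $\mathrm{I}_1(5,-2) < 0$. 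Expanding $g = \sum_k Y_k$ into spherical harmonics and applying the Funk--Hecke formula then yields $H_{-2}(g) \leq H_{-2}(\mathbf{1})|\mu_g|^2$, but with equality whenever the degree-one projection $Y_1$ vanishes; this is a strictly wider equality class than in Proposition \ref{claim1}.

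For the characterisation of extremisers, equality in the bilinear step forces $\widehat{f}(\xi) = e^{a|\xi| + b\cdot\xi + c}/|\xi|$ with $a, c \in \mathbb{C}$, $b \in \mathbb{C}^5$, and $\mathrm{Re}(a) < -|\mathrm{Re}(b)|$, by Theorem \ref{t:BR}. Substituting and carrying out the $r$-integral gives $g(\eta)$ proportional to $(-\mathrm{Re}(a) - \mathrm{Re}(b)\cdot\eta)^{-5}$. Equality in the spectral step requires the projection of $g$ onto degree-one spherical harmonics to vanish; via the projection $\Pi g(\eta) = \frac{5}{|\mathbb{S}^4|} \int_{\mathbb{S}^4} P_{1,5}(\eta \cdot \omega) g(\omega)\,\mathrm{d}\omega$ and the Funk--Hecke formula, this condition reduces to the vanishing of the explicit one-dimensional integral $\int_{-1}^1 t(1-t^2)(1+At)^{-5}\,\mathrm{d}t$ with $A := |\mathrm{Re}(b)|/\mathrm{Re}(a) \in (-1,0]$. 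Expanding $(1+At)^{-5}$ as a power series in $t$ and retaining only the surviving odd powers of $t$ shows the integral has strict sign throughout $(-1,0)$, so vanishing forces $A = 0$, that is, $\mathrm{Re}(b) = 0$.

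The main obstacle is precisely this endpoint widening of the equality class: the spectral inequality at $\lambda = -2$ does not by itself force $g$ to be constant, and so the characterisation of extremisers requires the dedicated Funk--Hecke computation above to rule out non-trivial $\mathrm{Re}(b)$. The remaining work is routine bookkeeping involving $|\mathbb{S}^3|$, $|\mathbb{S}^4|$, the value of the beta function at $\lambda = -2$, and the constant $\mathbf{C}(5)$ from Theorem \ref{t:BR}, which together produce the stated value $\mathbf{W}(5,1) = (24\pi^2)^{-1/4}$.
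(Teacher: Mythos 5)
Your proposal follows the paper's own argument in Section 3 essentially step for step: the same passage to $H_{-2}(g)$ via polar coordinates, the same direct computation showing $\mathrm{I}_0(5,-2) > 0$, $\mathrm{I}_1(5,-2) < 0$ and $\mathrm{I}_k(5,-2) = 0$ for $k \geq 2$ using that $1-t$ has degree one against the weight $(1-t^2)$, the same reduction to a weaker-than-constant equality class, and the same Funk--Hecke argument on the degree-one projection of $g(\eta) \propto (-\mathrm{Re}(a) - \mathrm{Re}(b)\cdot\eta)^{-5}$ to rule out $\mathrm{Re}(b) \neq 0$. The one place where you go slightly beyond the paper is that you spell out why the vanishing of $\int_{-1}^1 t(1-t^2)(1+At)^{-5}\,\mathrm{d}t$ forces $A = 0$: expanding in powers of $t$, only odd powers of $t$ survive against the even weight, and since $-A > 0$ each surviving coefficient $\binom{n+4}{4}(-A)^n$ with $n$ odd is strictly positive, so the integral is strictly positive on $A \in (-1,0)$. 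The paper merely asserts ``this forces $A=0$''; your power-series justification is a welcome small addition, and otherwise the two proofs coincide.
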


\end{document}